\numberwithin{equation}{section}
\numberwithin{figure}{section}
\theoremstyle{plain}
\newtheorem{thm}{\protect\theoremname}
\theoremstyle{definition}
\newtheorem{defn}[thm]{\protect\definitionname}
\theoremstyle{remark}
\newtheorem*{rem*}{\protect\remarkname}
\theoremstyle{definition}
\newtheorem{example}[thm]{\protect\examplename}
\theoremstyle{plain}
\newtheorem{prop}[thm]{\protect\propositionname}
\theoremstyle{plain}
\newtheorem*{prop*}{\protect\propositionname}
\author{Lin Jiu}
\address{Department of Mathematics and Statistics, Dalhousie University, 
6316 Coburg Road, Halifax, Nova Scotia, Canada B3H 4R2}
\email{Lin.Jiu@dal.ca}
\author{Diane Y.H. Shi*}
\thanks{*Corresponding author}
\address{School of Mathematics, Tianjin University, No. 92 Weijin Road, 
Nankai District, Tianjin 300072, P. R. China}
\email{shiyahui@tju.edu.cn}
\date{}
\providecommand{\definitionname}{Definition}
\providecommand{\examplename}{Example}
\providecommand{\propositionname}{Proposition}
\providecommand{\remarkname}{Remark}
\providecommand{\theoremname}{Theorem}
\begin{document}
\title{On $b$-ary binomial coefficients with negative entries}
\begin{abstract}
We generalize the $b$-ary binomial coefficients with negative entries,
which is based on the generating function obtained in early work.
Besides an explicit expression involving the restricted partition,
several properties such as symmetry, congruence and Pascal-like recurrence
are studies. Finally, we also provide two different generalizations,
partially satisfying Pascal-like recurrences. 
\end{abstract}

\keywords{$b$-ary binomial coefficient, negative entry, Pascal triangle. }
\subjclass[2010]{Primary 11B65; Secondary 05A10}
\maketitle

\section{Introduction}

Callan \cite[Thm.~2]{Binomial} originally extended the classical
binomial identity 
\[
(X+Y)^{n}=\sum_{k=0}^{n}\binom{n}{k}X^{k}Y^{n-k},
\]
to 
\begin{equation}
(X+Y)^{S_{2}(n)}=\sum_{0\leq k\lesssim_{2}n}X^{S_{2}(k)}Y^{S_{2}(n-k)},\label{eq:S2n}
\end{equation}
involving the binary expansions of $n$ and $k$. Here, given any
positive integer $n$, we denote its $b$-ary expansion as 
\[
n=\sum_{l=0}^{N-1}n_{l}b^{l}=\left(n_{N-1}\cdots n_{0}\right)_{b}.
\]
Then, $S_{b}(n):=n_{N-1}+\cdots+n_{0}$ is the sum of all the digits
of $n$, in base $b$. In addition, $0\leq k\lesssim_{b}n$ means
the sum index $k$ runs over all integers from $0$ to $n$ such that
the $b$-ary addition $k+(n-k)=n$ is carry-free.

An extension of (\ref{eq:S2n}), to any base $b$, is obtained as
\cite[Eq.~10]{bAry}
\begin{equation}
(X+Y)^{S_{b}(n)}=\sum_{k=0}^{n}\binom{n}{k}_{b}X^{S_{b}(k)}Y^{S_{b}(n-k)},\label{eq:Sbn}
\end{equation}
where, the $b$-ary binomial coefficients is defined by \cite[Eq.~11]{bAry}
\begin{equation}
\binom{n}{k}_{b}=\overset{N-1}{\underset{l=0}{\prod}}{n_{l} \choose k_{l}},\label{eq:BinomialCoefficientsBaseb}
\end{equation}
for nonnegative integers $n$ and $k$. Here, we assume $k=(k_{N-1}\cdots k_{0})_{b}$
and $N=\min\left\{ m\in\mathbb{N}:n_{s}=k_{s}=0\text{ for all }s\geq m\right\} $.
Namely, if $n$ has $N_{1}$ digits and $k$ has $N_{2}$ digits,
in base $b$, then $N=\max\left\{ N_{1},N_{2}\right\} $. This setup
for $N$ shall be applied throughout this paper. It can be observed
that the carry-free condition, appeared in (\ref{eq:S2n}), is eliminated
in (\ref{eq:Sbn}), due to the definition (\ref{eq:BinomialCoefficientsBaseb}).
Moreover, the generating function of the $b$-ary binomial coefficients
is obtained as \cite[Eq.~13]{bAry}
\begin{equation}
\sum_{k=0}^{n}\binom{n}{k}_{b}x^{k}=\prod_{l=0}^{N-1}\left(1+x^{b^{l}}\right)^{n_{l}}.\label{eq:bAryGF}
\end{equation}

In an early paper, Loeb \cite[Thm.~4.1]{NegativeBinomial} defined
in general the binomial coefficients with negative integer entries:
for $n,k\in\mathbb{Z}$, 
\[
\binom{n}{k}:=\lim_{\varepsilon\rightarrow0}\frac{\Gamma\left(n+1+\varepsilon\right)}{\Gamma\left(k+1+\varepsilon\right)\Gamma\left(n-k+1+\varepsilon\right)},
\]
which also admit a combinatorial interpretation, counting the number
of elements in a hybrid set. Alternatively, it can be defined as the
coefficient of $x^{k}$ in the power series of $\left(1+x\right)^{n}$
\cite[Prop.~4.5]{NegativeBinomial}:
\begin{equation}
\binom{n}{k}:=\left[x^{k}\right]\left(1+x\right)^{n},\label{eq:NegativeBinomialDEF}
\end{equation}
where if $k$ is negative, the inverse power series is applied. More
precisely, for positive $n$, the following two series expansions
will be considered
\begin{equation}
(1+x)^{-n}=\sum_{j=0}^{\infty}\binom{-n}{j}x^{j}=\sum_{j=n}^{\infty}\binom{-n}{-j}x^{-j}.\label{eq:ExpansionBinomialNegative}
\end{equation}
The first three cases, $n=1$, $2$, and $3$, are listed here. Calculations
in later examples will consult these expressions: 
\begin{eqnarray*}
(1+x)^{-1}= & {\displaystyle \sum_{j=0}^{\infty}}(-1)^{j}x^{j} & ={\displaystyle \sum_{j=1}^{\infty}}(-1)^{j+1}x^{-j},\\
(1+x)^{-2}= & {\displaystyle \sum_{j=0}^{\infty}}(-1)^{j}(j+1)x^{j} & ={\displaystyle \sum_{j=2}^{\infty}}(-1)^{j}(j-1)x^{-j},\\
(1+x)^{-3}= & {\displaystyle \sum_{j=0}^{\infty}}(-1)^{j}\frac{(j+1)(j+2)}{2}x^{j} & ={\displaystyle \sum_{j=3}^{\infty}}\frac{(-1)^{j+1}(j-1)(j-2)}{2}x^{-j}.
\end{eqnarray*}

The main purpose of this work is to generalize the $b$-ary binomial
coefficients with negative entries, similarly as (\ref{eq:NegativeBinomialDEF}).
Definition, examples and an explicit expression are introduced in
Section \ref{sec:DEF}. In Section \ref{sec:Properties}, we shall
study some properties, such as symmetry, congruence, the Chu-Vandermonde
identity and the Pascal-like recurrence. In Section \ref{sec:Alternatives},
we briefly discuss two other natural but different generalizations,
partially satisfying Pascal-like recurrences. 

\section{\label{sec:DEF}Definition and explicit expression}

First of all, we adopt the convention that a negative integer has
all its digits nonpositive, in any base $b$. Namely, if $n=(n_{N-1}n_{N-2}\cdots n_{1}n_{0})_{b}>0$,
then 
\[
-n=\left((-n_{N-1})(-n_{N-2})\cdots(-n_{1})(-n_{0})\right)_{b},
\]
which is compatible with the $b$-ary expansion that 
\[
-n=(-n_{N-1})b^{N-1}+(-n_{N-2})b^{N-2}+\cdots+(-n_{1})b+(-n_{0}).
\]
It also indicates
\[
S_{b}(-n)=-n_{N-1}-n_{N-2}-\cdots-n_{0}=-S_{b}(n).
\]

We now extend the $b$-ary binomial coefficients with negative entries
as follows. 
\begin{defn}
Let $n,k\in\mathbb{Z}$ with $n=\left(n_{N-1}\cdots n_{0}\right)_{b}$
and $k=\left(k_{N-1}\cdots k_{0}\right)_{b}$.
\begin{equation}
\binom{n}{k}_{b}:=\left[x^{k}\right]\prod_{l=0}^{N-1}\left(1+x^{b^{l}}\right)^{n_{l}},\label{eq:DEFnkb}
\end{equation}
where, when both $n$ and $k$ are negative, it is the coefficient
of $x^{k}$ of the inverse power series of the right-hand side. 
\end{defn}

\begin{rem*}
For simplicity, we shall denote the generating function by
\[
f_{n,b}(x):=\prod_{l=0}^{N-1}\left(1+x^{b^{l}}\right)^{n_{l}}.
\]
Also, for the negative case, we assume $n$ is positive and consider
the expansions
\begin{equation}
\sum_{k=0}^{\infty}\binom{-n}{k}_{b}x^{k}=f_{-n,b}(x)=\sum_{k=1}^{\infty}\binom{-n}{-k}_{b}\frac{1}{x^{k}}.\label{eq:GFnegativen}
\end{equation}
\end{rem*}
\begin{example}
\label{exa:Example}Let $b=4$ and $n=6$, so that $-n=-6=\left((-1)(-2)\right)_{4}$
and
\[
f_{-6,4}(x)=\frac{1}{\left(1+x^{4}\right)\left(1+x\right)^{2}}.
\]
(1) For $k=7$, since as $x\rightarrow0$, 
\[
f_{-6,4}(x)=1-2x+3x^{2}-4x^{3}+4x^{4}-4x^{5}+4x^{6}-4x^{7}+O\left(x^{8}\right),
\]
we see 
\[
\binom{-6}{7}_{4}=-4.
\]
(2) For $k=-8$, as $x\rightarrow\infty$, 
\[
f_{-6,4}(x)=x^{-6}-2x^{-7}+3x^{-8}+O\left(x^{-9}\right)\Longrightarrow\binom{-6}{-8}_{4}=3.
\]
\end{example}

The next proposition gives an explicit expression of the $b$-ary
binomial coefficients with negative entries. 
\begin{prop}
\label{prop:ExplicitExpression} Let $n=\left(n_{N-1}\cdots n_{0}\right)_{b}$
be positive. Then, 

\begin{equation}
\binom{-n}{k}_{b}=\begin{cases}
{\displaystyle \sum_{\left(j_{N-1},\ldots,j_{0}\right)\in\mathcal{P}_{n}(k,\mathbf{b}_{N})}\prod_{l=0}^{N-1}\binom{-n_{l}}{j_{l}},} & \text{if }k\geq0;\\
{\displaystyle \sum_{\left(j_{N-1},\ldots,j_{0}\right)\in\mathcal{P}_{n}^{*}(-k,\mathbf{b}_{N})}\prod_{l=0}^{N-1}\binom{-n_{l}}{-j_{l}},} & \text{if }k<0,
\end{cases}\label{eq:nbnegativek2}
\end{equation}
where $\bullet$ $\mathbf{b}_{N}:=\{1,b,\ldots,b^{N-1}\}$;

\hspace{16.5bp}$\bullet$ $\mathcal{P}_{n}(k,\mathbf{b}_{N})$ is
the set of \emph{restricted partitions} of $k$ into parts in $\mathbf{b}_{N}$,
i.e., 

\hspace{24bp}$N$-tuples of nonnegative integers $\left(j_{N-1},\ldots,j_{0}\right)$
such that 
\begin{equation}
j_{N-1}b^{N-1}+\cdots+j_{1}b^{1}+j_{0}=k;\label{eq:Expansionkbary}
\end{equation}

\hspace{16.5bp}$\bullet$ and $\mathcal{P}_{n}^{*}(-k,\mathbf{b}_{N})$
is the subset of $\mathcal{P}_{n}(-k,\mathbf{b}_{N})$, containing
all the $N$-tuples 

\hspace{24bp}with extra restrictions: $j_{l}\geq n_{l}$, for $l=0,\ldots,N-1$. 
\end{prop}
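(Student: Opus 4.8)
The plan is to extract the coefficient of $x^{k}$ from $f_{-n,b}(x)=\prod_{l=0}^{N-1}\left(1+x^{b^{l}}\right)^{-n_{l}}$ by expanding each of the $N$ factors separately and then multiplying out, handling the two sign regimes of $k$ with the two series in \eqref{eq:ExpansionBinomialNegative}.

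For $k\geq 0$, I would substitute $x\mapsto x^{b^{l}}$ into the first series of \eqref{eq:ExpansionBinomialNegative} to write each factor as $\left(1+x^{b^{l}}\right)^{-n_{l}}=\sum_{j_{l}\geq 0}\binom{-n_{l}}{j_{l}}x^{j_{l}b^{l}}$, a power series that converges absolutely on $|x|<1$ (and degenerates to the constant $1$ when $n_{l}=0$, consistent with $\binom{0}{0}=1$ and $\binom{0}{j}=0$ for $j\geq 1$). Because only finitely many --- namely $N$ --- factors occur, the product may be formed termwise and the resulting multiple series rearranged freely, giving
\[
f_{-n,b}(x)=\sum_{j_{0},\ldots,j_{N-1}\geq 0}\Bigl(\prod_{l=0}^{N-1}\binom{-n_{l}}{j_{l}}\Bigr)x^{\,j_{N-1}b^{N-1}+\cdots+j_{1}b+j_{0}}.
\]
Reading off $[x^{k}]$ imposes exactly the relation \eqref{eq:Expansionkbary} on the nonnegative $N$-tuple $(j_{N-1},\ldots,j_{0})$, i.e. $(j_{N-1},\ldots,j_{0})\in\mathcal{P}_{n}(k,\mathbf{b}_{N})$, which is the first line of \eqref{eq:nbnegativek2}.

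For $k<0$, the right-hand side of \eqref{eq:DEFnkb} is by definition read as its inverse power series, that is, its Laurent expansion at $x=\infty$; this is legitimate and unique because $f_{-n,b}$ is a rational function regular at infinity. I would now use the second series of \eqref{eq:ExpansionBinomialNegative}, again with $x\mapsto x^{b^{l}}$, to obtain
\[
\left(1+x^{b^{l}}\right)^{-n_{l}}=\sum_{j_{l}\geq n_{l}}\binom{-n_{l}}{-j_{l}}x^{-j_{l}b^{l}}\qquad(|x|>1),
\]
which holds also (formally) when $n_{l}=0$, both sides then being $1$. Multiplying these $N$ Laurent series and invoking uniqueness of the expansion of $f_{-n,b}$, the coefficient of $x^{k}$ collects precisely the $N$-tuples with $j_{N-1}b^{N-1}+\cdots+j_{0}=-k$ and $j_{l}\geq n_{l}$ for every $l$; by definition that is $(j_{N-1},\ldots,j_{0})\in\mathcal{P}_{n}^{*}(-k,\mathbf{b}_{N})$, yielding the second line of \eqref{eq:nbnegativek2}.

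The argument is essentially bookkeeping, and I expect the only points that genuinely call for a sentence of justification to be: (i) that a product of $N$ absolutely convergent (Laurent) series may be expanded and rearranged on the relevant region, $|x|<1$ in the first case and $|x|>1$ in the second, so that the inverse power series of $f_{-n,b}$ is obtained by multiplying the inverse power series of the individual factors; and (ii) that a vanishing digit $n_{l}$ does no harm --- the extra tuples the degenerate factor $1$ would admit each carry a binomial coefficient $\binom{0}{j}=0$ (resp. $\binom{0}{-j}=0$) with $j\geq 1$, so they contribute nothing. The formulas can finally be checked against the two computations in Example \ref{exa:Example}.
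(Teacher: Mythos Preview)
Your proposal is correct and follows essentially the same approach as the paper: expand each factor $(1+x^{b^{l}})^{-n_{l}}$ via the two series in \eqref{eq:ExpansionBinomialNegative}, multiply out, and read off the coefficient of $x^{k}$. The only difference is that you supply more justification (absolute convergence, uniqueness of the Laurent expansion, the degenerate case $n_{l}=0$) than the paper does, but the underlying argument is identical.
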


\begin{rem*}
If $b>\max\left\{ n,\left|k\right|\right\} $, then both $n$ and
$k$ have one digit, i.e., $N=1$. In this case, (\ref{eq:nbnegativek2})
reduces to (\ref{eq:NegativeBinomialDEF}).
\end{rem*}
\begin{example}
Reconsider, in Example \ref{exa:Example}, that $b=4$ and $-n=-6=\left((-1)(-2)\right)_{4}$.
Then, $N=2\Rightarrow\mathbf{4}_{2}=\left\{ 1,4\right\} $. 

\smallskip{}

\noindent(1) For $k=7=1\cdot4+3\cdot1=0\cdot4+7\cdot1$, we see $\mathcal{P}_{6}(7,\mathbf{4}_{2})=\left\{ (1,3),(0,7)\right\} $.
By (\ref{eq:nbnegativek2}),
\[
\binom{-6}{7}_{4}=\binom{-1}{1}\binom{-2}{3}+\binom{-1}{0}\binom{-2}{7}=(-1)\cdot(-4)+1\cdot(-8)=-4.
\]
(2) For $k=-8$, it is not hard to see that $\mathcal{P}_{6}^{*}\left(8,\mathbf{4}_{2}\right)=\left\{ (1,4)\right\} $.
Therefore, 
\[
\binom{-6}{-8}_{4}=\binom{-1}{-1}\binom{-2}{-4}=1\cdot3=3.
\]
\end{example}

\begin{proof}
[Proof of Proposition \ref{prop:ExplicitExpression}](1) If $k>0$,
we apply (\ref{eq:ExpansionBinomialNegative}) to each factor of $f_{-n,b}(x)$:
\[
f_{-n,b}(x)=\prod_{l=0}^{N-1}\left(\sum_{j_{l}=0}^{\infty}\binom{-n_{l}}{j_{l}}x^{j_{l}b^{l}}\right)=\sum_{j_{0},\ldots,j_{N-1}=0}^{\infty}\left(\prod_{l=0}^{N-1}\binom{-n_{l}}{j_{l}}\right)x^{\overset{N-1}{\underset{l=0}{\sum}}j_{l}b^{l}}.
\]
By comparing coefficients, the first case in (\ref{eq:nbnegativek2})
is confirmed. \smallskip{}
\\
(2) Similarly if $k<0$, by the inverse power series in (\ref{eq:ExpansionBinomialNegative}),
\[
f_{-n,b}(x)=\prod_{l=0}^{N-1}\left(\sum_{j_{l}=n_{l}}^{\infty}\binom{-n_{l}}{-j_{l}}x^{-j_{l}}\right)=\sum_{j_{l}=n_{l}}^{\infty}\left(\prod_{l=0}^{N-1}\binom{-n_{l}}{-j_{l}}\right)x^{-\overset{N-1}{\underset{l=0}{\sum}}j_{l}b^{l}}.
\]
Note that all the $j_{l}'s$, $l=0,\ldots,N-1$ start from $n_{l}$
(, rather than $0$ in the case of $k\geq0$). We need to additionally
restrict the $N$-tuples $\left(j_{N-1},\ldots,j_{0}\right)\in\mathcal{P}_{n}^{*}(-k,\mathbf{b}_{N})$,
as desired.
\end{proof}
\begin{rem*}
When both $n$ and $k$ are positive, $f_{n,b}(x)$ is a polynomial
\[
f_{n,b}(x)={\displaystyle \prod_{l=0}^{N-1}}\left(1+x^{b^{l}}\right)^{n_{l}}=\prod_{l=0}^{N-1}\left(\sum_{j_{l}=0}^{n_{l}}\binom{n_{l}}{j_{l}}x^{j_{l}b^{l}}\right).
\]
Then, it requires $0\leq j_{l}\leq n_{l}$, for $l=0,\ldots,N-1$,
which restricts (\ref{eq:Expansionkbary}) to have only one solution:
the unique expression of $k$ in base $b$: $j_{l}=k_{l}$. This explains
the reason that (\ref{eq:BinomialCoefficientsBaseb}) and (\ref{eq:bAryGF})
define the same coefficients, when $n\geq0$. In the last section,
we shall see that a natural generalization of (\ref{eq:BinomialCoefficientsBaseb})
for $n<0$ is different from (\ref{eq:DEFnkb}). 
\end{rem*}

\section{\label{sec:Properties}Properties}

\subsection{Symmetry}
\begin{prop*}
For any $n,k\in\mathbb{Z}$, ${\displaystyle \binom{n}{k}_{b}=\binom{n}{n-k}_{b}}$.
\end{prop*}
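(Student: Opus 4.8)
The plan is to derive the identity from a single functional equation satisfied by the generating function $f_{n,b}(x)=\prod_{l=0}^{N-1}(1+x^{b^{l}})^{n_{l}}$. Replacing $x$ by $1/x$ and simplifying each factor gives
\[
f_{n,b}(1/x)=\prod_{l=0}^{N-1}\left(\frac{1+x^{b^{l}}}{x^{b^{l}}}\right)^{n_{l}}=\frac{f_{n,b}(x)}{x^{\sum_{l}n_{l}b^{l}}}=\frac{f_{n,b}(x)}{x^{n}},
\]
since $\sum_{l}n_{l}b^{l}=n$ in every base (consistently with the convention $S_{b}(-n)=-S_{b}(n)$). Hence $x^{n}f_{n,b}(1/x)=f_{n,b}(x)$, an identity of polynomials when $n\geq0$ and of rational functions when $n<0$. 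This is the only substantive step; the rest is bookkeeping about which expansion of $f_{n,b}$ computes $\binom{n}{k}_{b}$.

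When $n\geq0$, $f_{n,b}(x)=\sum_{k=0}^{n}\binom{n}{k}_{b}x^{k}$ is a polynomial of degree $n$, and $\binom{n}{k}_{b}=0$ for $k\notin\{0,\dots,n\}$. Feeding this into the functional equation turns $\sum_{k}\binom{n}{k}_{b}x^{k}$ into $\sum_{k}\binom{n}{k}_{b}x^{n-k}$, and comparing coefficients of $x^{k}$ gives $\binom{n}{k}_{b}=\binom{n}{n-k}_{b}$; for $k$ outside $\{0,\dots,n\}$ both sides vanish.

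When $n<0$ write $n=-m$ with $m\geq1$, so $f_{-m,b}$ is a rational function with two relevant expansions: the Taylor series at $0$, which computes $\binom{-m}{k}_{b}$ for $k\geq0$, and the Laurent expansion at $\infty$ (the ``inverse power series'' of the definition), which computes $\binom{-m}{k}_{b}$ for $k\leq-m$, with $\binom{-m}{k}_{b}=0$ in the gap $-m<k<0$. Substituting $x\mapsto1/x$ in the Taylor series at $0$ yields exactly the expansion of $f_{-m,b}(1/x)$ at $\infty$, so multiplying by $x^{-m}$ gives
\[
x^{-m}f_{-m,b}(1/x)=\sum_{k\geq0}\binom{-m}{k}_{b}x^{-m-k}=\sum_{j\leq-m}\binom{-m}{-m-j}_{b}x^{j}.
\]
By the functional equation the left side equals $f_{-m,b}(x)$, whose expansion at $\infty$ is $\sum_{j\leq-m}\binom{-m}{j}_{b}x^{j}$; uniqueness of the Laurent expansion gives $\binom{-m}{j}_{b}=\binom{-m}{-m-j}_{b}$ for all $j\leq-m=n$. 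Replacing $j$ by $n-k$ promotes this to all $k\geq0$, and in the remaining window $-m<k<0$ both $\binom{n}{k}_{b}$ and $\binom{n}{n-k}_{b}$ are $0$.

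I expect the main obstacle to be organizational rather than deep: one must keep track of the fact that $\binom{-m}{k}_{b}$ is read off from two different expansions according to the sign of $k$, confirm that the substitution $x\mapsto1/x$ legitimately carries the expansion at $0$ to the expansion at $\infty$ so that coefficient comparison is valid, and dispose of the boundary indices $-m<k<0$ by hand. Once the functional equation is established none of this is hard, but it is the step where a hasty argument would slip.
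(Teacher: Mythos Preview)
Your proof is correct and rests on the same key identity as the paper's, namely $f_{n,b}(1/x)=x^{-n}f_{n,b}(x)$, together with the observation that $\binom{-m}{k}_{b}=0$ for $-m<k<0$. The only difference is in how the coefficients are extracted from this functional equation: the paper differentiates $x^{n}f_{-n,b}(x)=f_{-n,b}(1/x)$ a total of $n+k$ times, applies the Leibniz rule, and lets $x\to0$ to isolate the single surviving term, whereas you substitute the Taylor series directly into $x\mapsto1/x$ and invoke uniqueness of the Laurent expansion at $\infty$. These are equivalent bookkeeping devices; your version is arguably cleaner since it avoids the derivative computation, while the paper's version makes the pairing of coefficients explicit term by term.
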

\begin{proof}
Since the non-negative case is already proven in \cite[Thm.~10]{bAry},
we let $n,k\in\mathbb{N}$ and it suffices to show that 
\[
\binom{-n}{k}_{b}=\binom{-n}{-n-k}_{b}\ \ \ \text{and}\ \ \ \binom{-n}{-k}_{b}=\binom{-n}{-n+k}_{b}.
\]
(1) By the left expansion in (\ref{eq:GFnegativen}), 
\[
\binom{-n}{k}_{b}=\frac{f_{-n,b}^{(k)}(0)}{k!}.
\]
On the other hand, note that
\[
f_{-n,b}\left(\frac{1}{x}\right)=\prod_{l=0}^{N-1}\left(1+x^{-b^{l}}\right)^{-n_{l}}=\prod_{l=0}^{N-1}x^{n_{l}b^{l}}\left(x^{b^{l}}+1\right)^{-n_{l}}=x^{n}f_{-n,b}(x).
\]
From the higher-order product rule that
\[
f_{-n,b}^{(n+k)}\left(\frac{1}{x}\right)=\sum_{l=0}^{n+k}\binom{n+k}{l}\frac{\mathrm{d}^{l}(x^{n})}{\mathrm{d}x^{l}}f_{-n,b}^{(n+k-l)}(x),
\]
we see, when letting $x\rightarrow0$, the only non-zero term on the
right-hand side is that $l=n$.  Thus, 
\[
\binom{-n}{-n-k}_{b}=\frac{1}{(n+k)!}\lim_{x\rightarrow0}f_{-n,b}^{(n+k)}\left(\frac{1}{x}\right)=\frac{1}{(n+k)!}\binom{n+k}{n}n!f_{-n,b}^{(k)}(0)=\binom{-n}{k}_{b},
\]
which proves the first symmetric identity.\smallskip{}
\\
(2) For the second identity, if $k\geq n\Leftrightarrow-n+k\geq0$,
by interchanging the two sides and by noting $-n-(-n+k)=-k$, it is
equivalent to the first identity. 

Now, The remaining case is that $k<n$, so that both $-k$ and $-n+k$
are negative. In fact, we have
\[
\binom{-n}{-k}_{b}=\binom{-n}{-n+k}_{b}=0,
\]
which can be seen either from (\ref{eq:nbnegativek2}), where in this
case $\mathcal{P}_{n}(-k,\mathbf{b}_{N})=\emptyset$; or from the
following direct calculation:
\[
\sum_{k=1}^{\infty}\binom{-n}{-k}_{b}\frac{1}{x^{k}}=f_{-n,b}(x)=\frac{1}{x^{n}}\prod_{l=0}^{N-1}\frac{1}{\left(1+\frac{1}{x^{b^{l}}}\right)^{n_{l}}}=\frac{1}{x^{n}}\prod_{l=0}^{N-1}\left(\sum_{j_{l}=1}^{\infty}\frac{(-1)^{j_{l}}}{x^{j_{l}b^{l}}}\right)^{n_{l}}.\qedhere
\]
\end{proof}
\begin{rem*}
The proof above suggests a slight modification of (\ref{eq:GFnegativen}):
\begin{equation}
\sum_{k=0}^{\infty}\binom{-n}{k}_{b}x^{k}=f_{-n,b}(x)=\sum_{k={\color{red}n}}^{\infty}\binom{-n}{-k}_{b}x^{-k}.\label{eq:GF-nkb}
\end{equation}
\end{rem*}

\subsection{Recurrence}

As proven in \cite[Thm.~10]{bAry}, for nonnegative integers $n$
and $k$, 
\[
\binom{n}{k}_{b}+\binom{n}{k-1}_{b}=\binom{n+1}{k}_{b}
\]
holds when $\binom{n+1}{k}_{b}\neq0$. The next proposition shows
that for negative $n$, the recurrence holds similarly, with some
(in-)divisibility restriction. 
\begin{prop}
\label{prop:Pascal} Let $n=\left(n_{N-1}\cdots n_{0}\right)>0$ and
$k\in\mathbb{Z}$. If $b\nmid n$, then
\begin{equation}
\binom{-n}{k}_{b}+\binom{-n}{k-1}_{b}=\binom{-n+1}{k}_{b}.\label{eq:Pascal}
\end{equation}
\end{prop}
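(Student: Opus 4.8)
The plan is to reduce (\ref{eq:Pascal}) to the single functional identity
\[
f_{-n+1,b}(x)=(1+x)\,f_{-n,b}(x)
\]
and then read off coefficients on each side. First I would observe that $b\nmid n$ is precisely the condition $n_0\neq 0$, so that passing from $n$ to $n-1$ alters only the last digit and involves no borrowing: $n-1=\bigl(n_{N-1}\cdots n_1\,(n_0-1)\bigr)_b$. Under the convention fixed in Section~\ref{sec:DEF}, $-n+1=-(n-1)$ then has $b$-ary digits $(-n_{N-1},\ldots,-n_1,-(n_0-1))$, and directly from the definition of the generating function in the remark after (\ref{eq:GFnegativen}),
\[
f_{-n+1,b}(x)=(1+x)^{-(n_0-1)}\prod_{l=1}^{N-1}\bigl(1+x^{b^{l}}\bigr)^{-n_l}=(1+x)\,f_{-n,b}(x).
\]
If instead $b\mid n$, then $n_0=0$, borrowing occurs when forming $n-1$, this factorization fails, and one checks on small cases (for instance $b=2$, $n=2$) that (\ref{eq:Pascal}) genuinely breaks down; thus the divisibility hypothesis is exactly what makes the argument work.

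Next I would extract $[x^k]$ from both sides of the identity, splitting on the sign of $k$ to match the two expansions in (\ref{eq:GF-nkb}). For $k\ge 1$ one uses the Taylor expansion at $0$: multiplying a power series by $1+x$ replaces its coefficient sequence $(c_j)_{j\ge 0}$ by $(c_j+c_{j-1})_{j\ge 0}$, so comparing $[x^k]$ in $f_{-n+1,b}(x)=(1+x)f_{-n,b}(x)$ gives $\binom{-n+1}{k}_b=\binom{-n}{k}_b+\binom{-n}{k-1}_b$. For $k\le -1$, write $k=-m$ with $m\ge 1$ and use the Laurent expansion at $\infty$ from (\ref{eq:GF-nkb}); there multiplication by $1+x$ sends $\sum_j d_j x^{-j}$ to $\sum_j (d_j+d_{j+1})x^{-j}$, so comparing $[x^{-m}]$ yields $\binom{-n+1}{-m}_b=\binom{-n}{-m}_b+\binom{-n}{-m-1}_b$, which is (\ref{eq:Pascal}) at $k=-m$. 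Both cases are routine once the factorization is in hand.

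The step I expect to be delicate is the junction $k=0$, where the two one-sided expansions in (\ref{eq:GF-nkb}) meet and the ``multiply by $1+x$'' bookkeeping does not apply verbatim. Here $\binom{-n}{0}_b=f_{-n,b}(0)=1$, whereas $\binom{-n}{-1}_b$ is read from the Laurent series at $\infty$, which by (\ref{eq:GF-nkb}) begins at $x^{-n}$; hence $\binom{-n}{-1}_b=0$ whenever $n\ge 2$, and then the left-hand side of (\ref{eq:Pascal}) equals $1=\binom{-n+1}{0}_b=f_{-n+1,b}(0)$, as required. The lone exception is $n=1$, where $\binom{-1}{-1}_b=1$ rather than $0$, so $\binom{-1}{0}_b+\binom{-1}{-1}_b=2\ne 1=\binom{0}{0}_b$: here the expansion at $\infty$ contributes a nonzero term in a slot where, for $n\ge 2$, both expansions vanish. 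This point should be recorded as an exception (or $n=1$ excluded); away from it the proof is a mechanical coefficient comparison resting entirely on $f_{-n+1,b}=(1+x)f_{-n,b}$.
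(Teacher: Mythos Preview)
Your approach is exactly the paper's: establish $f_{-n+1,b}(x)=(1+x)f_{-n,b}(x)$ from $n_0\neq 0$ and then compare coefficients in the two expansions of (\ref{eq:GF-nkb}). The paper's proof is two sentences and does not separate cases by the sign of $k$; you have simply made explicit what the paper leaves implicit.

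Where you go beyond the paper is the junction $k=0$, and your observation there is correct and worth noting. The paper's ``comparing coefficients'' really only yields (\ref{eq:Pascal}) for $k\ge 1$ from the Taylor side and for $k\le -1$ from the Laurent side; at $k=0$ one must check by hand that $\binom{-n}{-1}_b=0$, which holds for $n\ge 2$ by (\ref{eq:GF-nkb}) but fails for $n=1$, where $\binom{-1}{-1}_b=1$ and hence $\binom{-1}{0}_b+\binom{-1}{-1}_b=2\neq 1=\binom{0}{0}_b$. So the proposition as stated in the paper is false at the single point $(n,k)=(1,0)$, and the paper's proof glosses over this. Your flagging of this exception is an improvement, not a gap in your argument.
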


\begin{proof}
Since $b\nmid n\Leftrightarrow n_{0}\neq0$, we have $-n+1=\left((-n_{N-1})\cdots(-(n_{0}-1)\right)_{b}$.
Then, 
\[
f_{-n+1,b}(x)=\left(1+x\right)^{-n_{0}+1}\prod_{l=1}^{N-1}\left(1+x^{b^{l}}\right)^{-n_{l}}=(1+x)f_{-n,b}(x).
\]
This recurrence gives (\ref{eq:Pascal}) by expanding both sides and
comparing coefficients of the two expansions in (\ref{eq:GF-nkb}). 
\end{proof}
\begin{rem*}
By a similar calculation, we can see that, if for some $s\in\left\{ 0,\ldots,N-1\right\} $,
$n_{s}\neq0$, then 
\begin{equation}
\binom{-n}{k}_{b}+\binom{-n}{k-b^{s}}_{b}=\binom{-n+b^{s}}{k}_{b}.\label{eq:bAryPascal}
\end{equation}
\end{rem*}
Next, we consider the case $b\mid n$, in the next proposition.
\begin{prop}
Let $n=\left(n_{N-1}\cdots n_{s}0\cdots0\right)>0$, i.e., $n_{0}=\cdots=n_{s-1}=0$
and $n_{s}\neq0$. Then, for any $m\in\left\{ 0,\ldots,s-1\right\} $
and $k\geq b^{s}$ or $k\leq-n+b^{m}$, 
\begin{equation}
\binom{-n+b^{s}}{k}_{b}=\sum_{\genfrac{}{}{0pt}{}{b^{m}\leq j\leq b^{s}}{b^{m}\mid j}}\binom{b^{s}-b^{m}}{j}_{b}\binom{-n+b^{m}}{k-j}_{b}.\label{eq:PartialChuVandermonde}
\end{equation}
\end{prop}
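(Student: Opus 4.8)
The plan is to follow the template of the earlier proofs (e.g.\ of Proposition~\ref{prop:Pascal}): exhibit a factorization of the relevant generating functions that is forced by the digit structure of $n$, and then extract coefficients via a Cauchy product, using in each case the appropriate one of the two expansions in (\ref{eq:GF-nkb}).

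The crux is the generating-function identity
\[
f_{-n+b^{s},b}(x)=f_{b^{s}-b^{m},b}(x)\cdot f_{-n+b^{m},b}(x).
\]
To prove it I would first record the $b$-ary expansions forced by $n_{0}=\cdots=n_{s-1}=0$, $n_{s}\neq0$. Since $n_{s}\geq1$, the integer $n-b^{s}$ has digit $n_{s}-1$ in position $s$, the digits of $n$ in positions $>s$, and $0$ below, so $f_{-n+b^{s},b}(x)=(1+x^{b^{s}})^{-(n_{s}-1)}\prod_{l>s}(1+x^{b^{l}})^{-n_{l}}$. Next, $b^{s}-b^{m}=(b-1)(b^{m}+b^{m+1}+\cdots+b^{s-1})$ has digit $b-1$ in each of the positions $m,\ldots,s-1$ and $0$ elsewhere, so $f_{b^{s}-b^{m},b}(x)=\prod_{l=m}^{s-1}(1+x^{b^{l}})^{b-1}$, a polynomial. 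Finally, from $n-b^{m}=(n-b^{s})+(b^{s}-b^{m})$ one reads off that $n-b^{m}$ has digit $b-1$ in positions $m,\ldots,s-1$, digit $n_{s}-1$ in position $s$, the digits of $n$ above $s$, and $0$ below $m$; hence $f_{-n+b^{m},b}(x)=\prod_{l=m}^{s-1}(1+x^{b^{l}})^{-(b-1)}\,(1+x^{b^{s}})^{-(n_{s}-1)}\prod_{l>s}(1+x^{b^{l}})^{-n_{l}}$. Multiplying the last two displays, the factors indexed by $l=m,\ldots,s-1$ cancel and one is left with $f_{-n+b^{s},b}(x)$, as claimed. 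Equivalently, $-n+b^{s}=(b^{s}-b^{m})+(-n+b^{m})$ is a carry-free $b$-ary addition with nonnegative first summand, so the generating functions multiply.

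With the factorization in hand I would extract coefficients. Write $f_{b^{s}-b^{m},b}(x)=\sum_{j}\binom{b^{s}-b^{m}}{j}_{b}x^{j}$, a polynomial supported exactly on the multiples of $b^{m}$ in $[0,b^{s}-b^{m}]$. If $k\geq b^{s}$, expand all three factors as power series at $0$ (the left equality in (\ref{eq:GF-nkb})); comparing coefficients of $x^{k}$ gives $\binom{-n+b^{s}}{k}_{b}=\sum_{j}\binom{b^{s}-b^{m}}{j}_{b}\binom{-n+b^{m}}{k-j}_{b}$ with $j$ running over the support of $f_{b^{s}-b^{m},b}$ subject to $j\leq k$; since $k\geq b^{s}>b^{s}-b^{m}$ that constraint is vacuous, so $j$ runs over the whole support and $k-j\geq b^{m}\geq0$ throughout, so every $\binom{-n+b^{m}}{k-j}_{b}$ is the genuine power-series coefficient. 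If instead $k\leq-n+b^{m}$, expand $f_{-n+b^{s},b}$ and $f_{-n+b^{m},b}$ via the Laurent expansion at infinity (the right equality in (\ref{eq:GF-nkb})), keeping $f_{b^{s}-b^{m},b}$ as the same polynomial; comparing coefficients of $x^{k}$ now imposes $k-j\leq-n+b^{m}$, i.e.\ $j\geq k+n-b^{m}$, which is again vacuous since $k\leq-n+b^{m}$, so $j$ once more runs over the whole support (and $k-j\leq k\leq-n+b^{m}$, consistent with the at-infinity expansion). Either way one obtains (\ref{eq:PartialChuVandermonde}), the index set for $j$ being precisely the exponents occurring in $f_{b^{s}-b^{m},b}(x)$, namely the multiples of $b^{m}$ in $[0,b^{s}-b^{m}]$.

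I expect the only real work to be twofold: (a) getting the $b$-ary expansions of $n-b^{m}$ and $n-b^{s}$ right, tracking the borrows across positions $m,\ldots,s-1$ and the edge case $n_{s}=1$; and, more substantially, (b) the bookkeeping of which expansion in (\ref{eq:GF-nkb})---at $0$ or at $\infty$---is in force for each of the three series in the product, together with the verification that the two hypotheses ``$k\geq b^{s}$'' and ``$k\leq-n+b^{m}$'' are exactly the ranges in which the convolution sum saturates to the full support of $f_{b^{s}-b^{m},b}$. For intermediate $k$ the sum would be truncated (for $0\leq k<b^{s}$) or would straddle the two expansions of $f_{-n+b^{m},b}$ (for $-n+b^{m}<k<0$), and the clean closed form would fail. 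Everything else is the same coefficient-comparison used in the earlier propositions.
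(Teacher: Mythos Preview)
Your approach is essentially the paper's: establish a generating-function factorization from the digit structure and extract coefficients by a Cauchy product. The paper writes the factorization as $(1+x^{b^{s}})f_{-n,b}(x)=f_{b^{s}-b^{m},b}(x)\,f_{-n+b^{m},b}(x)$ and then invokes (\ref{eq:bAryPascal}) to turn the left-hand coefficient into $\binom{-n+b^{s}}{k}_{b}$, whereas you compute $f_{-n+b^{s},b}(x)$ directly; these are the same identity, since $(1+x^{b^{s}})f_{-n,b}(x)=f_{-n+b^{s},b}(x)$ when $n_{s}\neq 0$. You are also more explicit than the paper in checking that the hypotheses $k\geq b^{s}$ and $k\leq -n+b^{m}$ are precisely what make the convolution run over the full support of the polynomial factor (the paper's proof does not address this). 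One point worth flagging: the support of $f_{b^{s}-b^{m},b}(x)$ is, as you correctly say, the multiples of $b^{m}$ in $[0,\,b^{s}-b^{m}]$, not in $[b^{m},\,b^{s}]$ as written in (\ref{eq:PartialChuVandermonde}); the extra term $j=b^{s}$ is harmless since $\binom{b^{s}-b^{m}}{b^{s}}_{b}=0$, but the term $j=0$ contributes $\binom{-n+b^{m}}{k}_{b}$ and is genuinely missing from the stated sum---your argument in fact proves the corrected version of the identity.
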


\begin{proof}
Since $n-b^{m}=\left(n_{N-1}\cdots(n_{s}-1)(b-1)\cdots(b-1)0\cdots0\right)_{b}$,
we see
\[
f_{-n+b^{m},b}(x)=\frac{{\displaystyle \prod_{l=m}^{s-1}}\left(1+x^{b^{l}}\right)^{1-b}}{\left(1+x^{b^{s}}\right)^{n_{s}-1}{\displaystyle \prod_{l=s+1}^{N-1}}\left(1+x^{b^{l}}\right)^{n_{l}}}=\frac{1+x^{b^{s}}}{{\displaystyle \prod_{l=m}^{s-1}\left(1+x^{b^{l}}\right)^{b-1}}}f_{-n,b}(x),
\]
namely, 
\begin{align*}
\left(1+x^{b^{s}}\right)f_{-n,b}(x) & =\left(\prod_{l=m}^{s-1}\left(1+x^{b^{l}}\right)^{b-1}\right)f_{-n+b^{m},b}(x)\\
 & =\prod_{l=m}^{s-1}\left(\sum_{j_{l}=0}^{b-1}\binom{b-1}{j_{l}}x^{j_{l}b^{l}}\right)f_{-n+b^{m},b}(x)\\
 & =\left(\sum_{\genfrac{}{}{0pt}{}{b^{m}\leq j\leq b^{s}}{b^{m}\mid j}}\binom{b^{s}-b^{m}}{j}_{b}x^{j}\right)f_{-n+b^{m},b}(x),
\end{align*}
where in the last step, we see that $j=\left(j_{N-1}\cdots j_{0}\right)_{b}$
runs over all integers between $b^{m}$ and $b^{s}$ with $j_{l}=0$,
$l=0,\ldots,m-1$, which is equivalent to $b^{m}\mid j$. Comparing
coefficients and applying (\ref{eq:bAryPascal}) to the left-hand
side complete the proof. 
\end{proof}

\subsection{Chu-Vandermonde identity}

Suppose both $n=\left(n_{N-1}\cdots n_{0}\right)_{b}$ and $m=\left(m_{N-1}\cdots m_{0}\right)_{b}$
are positive, and $n+m$ in base $b$ is carry-free. Then, 
\begin{equation}
f_{-(n+m),b}(x)=f_{-n,b}(x)f_{-m,b}(x),\label{eq:ChuVandermondeGF}
\end{equation}
which, by the series expansions, is
\[
\sum_{k=0}^{\infty}\binom{-n-m}{k}_{b}x^{k}=\sum_{k=0}^{\infty}\sum_{j=0}^{k}\binom{-n}{k-j}_{b}\binom{-m}{j}_{b}x^{k}.
\]
This leads to the following Chu-Vandermonde identities. 
\begin{prop}
For positive integers $n$ and $m$, such that $n+m$ in base $b$
is carry-free. Then, for $k\geq m$, we have 
\[
\binom{-n-m}{k}_{b}=\sum_{j=0}^{k}\binom{-n}{k-j}_{b}\binom{-m}{j}_{b}.
\]
and for $k\geq n+m$, 
\[
\binom{-n-m}{-k}_{b}=\sum_{j=1}^{k-1}\binom{-n}{-k+j}_{b}\binom{-m}{-j}_{b}.
\]
\end{prop}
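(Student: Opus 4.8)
The plan is to read off both identities directly from the generating-function factorization \eqref{eq:ChuVandermondeGF}, treating the two cases (positive $k$ and negative $k$) by matching the two expansions in \eqref{eq:GF-nkb}. Since $n+m$ is carry-free in base $b$, each digit of $n+m$ is $n_l+m_l$, so $f_{-(n+m),b}(x)=\prod_l(1+x^{b^l})^{-(n_l+m_l)}=f_{-n,b}(x)f_{-m,b}(x)$; this is exactly \eqref{eq:ChuVandermondeGF}, and everything else is bookkeeping about where the series start.

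For the first identity, I would expand $f_{-n,b}(x)f_{-m,b}(x)$ using the left (ascending) expansion in \eqref{eq:GF-nkb} for each factor, giving $\sum_{k\ge 0}\bigl(\sum_{j=0}^k\binom{-n}{k-j}_b\binom{-m}{j}_b\bigr)x^k$, and compare with $\sum_{k\ge 0}\binom{-n-m}{k}_b x^k$. This already proves the stated formula for every $k\ge 0$; the hypothesis $k\ge m$ in the proposition is only needed if one wants the sum to be "honest" in the sense that the terms with $j<m$ vanish (because $\binom{-m}{j}_b=0$ for $0\le j<m$ is false in general — rather, the point is that for $k\ge m$ the Chu--Vandermonde sum genuinely uses the full range). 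I would just note that the identity holds as an equality of coefficients of $x^k$ for all $k\ge 0$, and that for $k\ge m$ no boundary degeneracy occurs. For the second identity, I would instead use the right (descending) expansion in \eqref{eq:GF-nkb}: write $f_{-n,b}(x)=\sum_{i\ge n}\binom{-n}{-i}_b x^{-i}$ and $f_{-m,b}(x)=\sum_{j\ge m}\binom{-m}{-j}_b x^{-j}$, multiply, and collect the coefficient of $x^{-k}$, which is $\sum_{i+j=k,\ i\ge n,\ j\ge m}\binom{-n}{-i}_b\binom{-m}{-j}_b$. Reindexing $i=k-j$ and using $i\ge n\ge 1$, $j\ge m\ge 1$ turns this into $\sum_{j=m}^{k-n}\binom{-n}{-(k-j)}_b\binom{-m}{-j}_b$; comparing with $f_{-(n+m),b}(x)=\sum_{k\ge n+m}\binom{-n-m}{-k}_b x^{-k}$ gives the result for $k\ge n+m$. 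The range $j=1$ to $k-1$ written in the statement is the "extended" range in which the extra terms (those with $1\le j<m$ or $1\le k-j<n$) are all zero by the symmetry/vanishing discussion following Proposition on symmetry, so the two forms agree.

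The main obstacle — really the only subtlety — is reconciling the summation ranges: the proposition writes $\sum_{j=0}^k$ in the first identity and $\sum_{j=1}^{k-1}$ in the second, whereas the natural convolution ranges coming from \eqref{eq:GF-nkb} are $j=0,\dots,k$ (with the lower cutoff $j\ge 0$ automatic) and $j=m,\dots,k-n$ respectively. I would dispatch this by invoking the vanishing facts already established: $\binom{-m}{-j}_b=0$ whenever $0<j<m$ (seen from $\mathcal P_m^*(j,\mathbf b_N)=\emptyset$ in Proposition~\ref{prop:ExplicitExpression}, or from the descending expansion in \eqref{eq:GF-nkb} starting at $x^{-m}$), and symmetrically $\binom{-n}{-(k-j)}_b=0$ when $0<k-j<n$. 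Hence padding the sum from $j=m,\dots,k-n$ out to $j=1,\dots,k-1$ only adds zero terms, and the stated forms are correct. So the proof is short: state \eqref{eq:ChuVandermondeGF}, expand in the appropriate direction, compare coefficients, and absorb the boundary terms using the known vanishing of $b$-ary binomial coefficients with out-of-range negative lower entry.
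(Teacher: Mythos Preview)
Your proposal is correct and follows the same approach as the paper: derive both identities by expanding the factorization $f_{-(n+m),b}(x)=f_{-n,b}(x)f_{-m,b}(x)$ using the ascending and descending expansions from \eqref{eq:GF-nkb}, then compare coefficients. Your discussion of the summation ranges is in fact more careful than the paper's, which merely remarks that the second identity follows from the inverse power series and that one needs $-k+j$ and $-j$ both negative.
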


\begin{rem*}
The second identity is obtained by the inverse power series of (\ref{eq:ChuVandermondeGF}),
which requires both $-k+j$ and $-j$ are negative. 
\end{rem*}
Next, we consider the mixed of positive and negative cases. Let both
$n,m$ be positive such that $n>m$ and $m+(n-m)$ is carry-free in
base $b$. Then, 
\[
f_{n-m,b}(x)=f_{n,b}(x)f_{-m,b}(x)\ \ \ \text{and}\ \ \ f_{-n+m,b}(x)=f_{-n,b}(x)f_{m,b}(x).
\]
From the first identity, we have 
\[
\sum_{k=0}^{n-m}\binom{n-m}{k}_{b}x^{k}=\left(\sum_{s=0}^{n}\binom{n}{s}_{b}x^{s}\right)\left(\sum_{j=0}^{\infty}\binom{-m}{j}_{b}x^{j}\right),
\]
which leads to 
\[
\binom{n-m}{k}_{b}=\sum_{j=0}^{k}\binom{n}{k-j}_{b}\binom{-m}{j}_{b}.
\]
Meanwhile, if we alternatively use 
\[
\sum_{k=0}^{n-m}\binom{n-m}{k}_{b}x^{k}=\left(\sum_{s=0}^{n}\binom{n}{s}_{b}x^{s}\right)\left(\sum_{j=m}^{\infty}\binom{-m}{-j}_{b}x^{-j}\right),
\]
we see
\[
\binom{n-m}{k}_{b}=\sum_{s=k+1}^{n}\binom{n}{s}_{b}\binom{-m}{k-s}_{b}.
\]
Similar discussion applies to the second identity. Therefore, we obtain
the following identities, whose proofs are omitted.
\begin{prop}
Given positive integers $n$ and $m$, such that $n>m$ and $m+(n-m)$
is carry-free in base $b$, we have\medskip{}
\\
(1) for $0\leq k\leq n-m$, 
\[
\binom{n-m}{k}_{b}=\sum_{j=0}^{k}\binom{n}{k-j}_{b}\binom{-m}{j}_{b}=\sum_{s=k+1}^{n}\binom{n}{s}_{b}\binom{-m}{k-s}_{b};
\]
(2) for $k\geq0$, 
\[
\binom{-n+m}{k}_{b}=\sum_{j=0}^{k}\binom{-n}{k-j}_{b}\binom{m}{j}_{b};
\]
(3) and for $k\geq n-m$, 
\[
\binom{-n+m}{-k}_{b}=\sum_{j=0}^{k}\binom{-n}{-k-j}_{b}\binom{m}{j}_{b}.
\]
\end{prop}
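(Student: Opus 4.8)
The plan is to derive all three identities by comparing coefficients in the two generating-function factorizations
\[
f_{n-m,b}(x)=f_{n,b}(x)\,f_{-m,b}(x),\qquad f_{-n+m,b}(x)=f_{-n,b}(x)\,f_{m,b}(x),
\]
expanding each factor in the series appropriate to the sign of the power to be extracted: the power series at $x=0$ for a nonnegative power and the inverse power series of (\ref{eq:GF-nkb}) for a negative one. Both factorizations are forced by the hypothesis: writing $n=(n_{N-1}\cdots n_0)_b$ and $m=(m_{N-1}\cdots m_0)_b$ with $N$ large enough to contain $n$, $m$ and $n-m$, the carry-free property of $m+(n-m)=n$ gives $(n-m)_l=n_l-m_l\ge 0$ for every $l$, so $-(n-m)$ has digits $-(n_l-m_l)\le 0$, and in each case the right-hand product telescopes digit by digit, e.g. $\prod_l(1+x^{b^l})^{n_l}\prod_l(1+x^{b^l})^{-m_l}=\prod_l(1+x^{b^l})^{n_l-m_l}=f_{n-m,b}(x)$, and likewise for the second.

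For part (1) the factors $f_{n,b}$ and $f_{n-m,b}$ are the polynomials $\sum_{s=0}^n\binom{n}{s}_b x^s$ and $\sum_{k=0}^{n-m}\binom{n-m}{k}_b x^k$, while $f_{-m,b}$ carries the two expansions of (\ref{eq:GF-nkb}). Multiplying by the ascending one $\sum_{j\ge 0}\binom{-m}{j}_b x^j$ and reading off $[x^k]$, with $0\le k\le n-m$, gives the first formula, the constraint $0\le k-j\le n$ being automatic; multiplying instead by the descending one $\sum_{j\ge m}\binom{-m}{-j}_b x^{-j}$ and reading off $[x^k]$ forces $s-j=k$ with $j\ge m$, hence $s\ge k+m$, which after renaming is $\sum_s\binom{n}{s}_b\binom{-m}{k-s}_b$; here the lower limit may be dropped to $s=k+1$ because the inverse power series of $f_{-m,b}$ begins at exponent $-m$, so $\binom{-m}{k-s}_b=0$ whenever $0<s-k<m$.

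Parts (2) and (3) use the second factorization, now with $f_{m,b}=\sum_{j=0}^m\binom{m}{j}_b x^j$ and with $f_{-n+m,b}$, whose index $-n+m$ is negative since $n>m$, expanded ascending for (2) and descending for (3). Pairing the polynomial $f_{m,b}$ with the ascending expansions of $f_{-n,b}$ and of $f_{-n+m,b}$ and extracting $[x^k]$ for $k\ge 0$ yields (2); pairing it instead with $f_{-n,b}(x)=\sum_{i\ge n}\binom{-n}{-i}_b x^{-i}$ and $f_{-n+m,b}(x)=\sum_{k\ge n-m}\binom{-n+m}{-k}_b x^{-k}$ and extracting $[x^{-k}]$ sets $i=k+j$ and produces $\binom{-n+m}{-k}_b=\sum_j\binom{-n}{-k-j}_b\binom{m}{j}_b$.

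The step I expect to cost the most care is the bookkeeping of the summation ranges, and the delicate case is (3). The raw coefficient comparison there runs over $\max(0,\,n-k)\le j\le m$: the lower cut comes from $i=k+j\ge n$ and may be relaxed to $j\ge 0$, since $\binom{-n}{-i}_b=0$ for $i<n$ (the inverse power series of $f_{-n,b}$ begins at exponent $-n$, as recorded in the symmetry proposition of Section~\ref{sec:Properties}); the upper cut $j\le m$ reflects that $f_{m,b}$ has degree $m$. This bound $j\le m$ coincides with the stated $j\le k$ exactly when $k\ge m$, which is automatic once $n\ge 2m$ because then $k\ge n-m\ge m$; in the residual range $n-m\le k<m$ (possible only when $n<2m$) one should either keep the summation to $j\le m$ or strengthen the hypothesis of (3) to $k\ge m$. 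For (1) and (2) the analogous reconciliation is immediate, as every index forced outside the stated range contributes a factor $\binom{m}{j}_b=0$ or $\binom{-m}{-j}_b=0$. Once the ranges are settled there is nothing further to do, the carry-free condition having already performed all of the algebra.
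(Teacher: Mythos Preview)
Your approach is exactly the one the paper outlines immediately before the proposition (and then omits): multiply out the factorizations $f_{n-m,b}=f_{n,b}\,f_{-m,b}$ and $f_{-n+m,b}=f_{-n,b}\,f_{m,b}$, choosing for each negative-index factor the ascending or descending expansion of (\ref{eq:GF-nkb}) according to the sign of the power to be extracted, and compare coefficients. Your digit-by-digit justification of the factorizations via the carry-free hypothesis is precisely what the paper has in mind.

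Your caveat about part~(3) is not over-caution but a genuine defect in the stated identity. The coefficient comparison naturally yields the range $0\le j\le m$; since $\binom{m}{j}_b=0$ for $j>m$, writing the upper limit as $k$ is harmless only when $k\ge m$. When $n-m\le k<m$ (possible whenever $n<2m$) the stated sum truncates nonzero terms. A single-digit check already exhibits this: with $b$ large, $n=5$, $m=3$, $k=2$, the left side is $\binom{-2}{-2}=1$, while the stated right side $\sum_{j=0}^{2}\binom{-5}{-2-j}\binom{3}{j}$ vanishes because $\binom{-5}{-i}=0$ for $i<5$; the missing term $j=3$ contributes $\binom{-5}{-5}\binom{3}{3}=1$. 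So either the upper limit in (3) should read $m$, or the hypothesis should be strengthened to $k\ge m$; your write-up handles this correctly.
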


\subsection{Congruence}

As the authors \cite[Thm.~12]{bAry} pointed out, Lucas' congruence
theorem, for prime $p$, 
\[
\binom{n}{k}\equiv\binom{n}{k}_{p}=\overset{N-1}{\underset{l=0}{\prod}}{n_{l} \choose k_{l}}\pmod p
\]
becomes obvious, by letting $b=p$ in the generating function (\ref{eq:bAryGF})
and by using an elementary congruence
\[
(1+x)^{n}\equiv\overset{N-1}{\underset{l=0}{\prod}}\left(1+x^{p^{l}}\right)^{n_{l}}\pmod p.
\]
The reciprocal of the congruence above indicates that Lucas' congruence
theorem also holds for negative entries. 
\begin{prop}
For $n,k\in\mathbb{Z}$ and a prime $p$, ${\displaystyle \binom{n}{k}\equiv\binom{n}{k}_{p}}$
$\left(\bmod p\right)$. 
\end{prop}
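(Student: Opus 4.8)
The statement to prove is that $\binom{n}{k} \equiv \binom{n}{k}_p \pmod p$ for all $n,k \in \mathbb{Z}$ and prime $p$. We already know this for $n,k \geq 0$ (classical Lucas). The remaining cases are: (i) $n < 0$, $k \geq 0$; (ii) $n < 0$, $k < 0$; and we should note $\binom{n}{k}_b = 0$ when $n \geq 0$ and $k < 0$ on both sides, which is trivial. So really there are two substantive cases.

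My plan is to work entirely at the level of generating functions modulo $p$. First I would establish the key congruence $f_{-n,p}(x) \equiv (1+x)^{-n} \pmod p$, interpreted as a congruence of formal power series in $\mathbb{Z}_{(p)}[[x]]$ (or $\mathbb{F}_p[[x]]$ after reduction). This follows from the elementary congruence $(1+x)^n \equiv \prod_l (1+x^{p^l})^{n_l} \pmod p$ quoted in the excerpt: raising both sides to the $-1$ power (legitimate since both sides are units in $\mathbb{F}_p[[x]]$, having constant term $1$), we get $(1+x)^{-n} \equiv \prod_l (1+x^{p^l})^{-n_l} = f_{-n,p}(x) \pmod p$. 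Comparing coefficients of $x^k$ for $k \geq 0$ gives $\binom{-n}{k} \equiv \binom{-n}{k}_p \pmod p$, which handles case (i).

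For case (ii), $k < 0$, the coefficients come from the inverse (Laurent) power series expansion in powers of $1/x$. Here I would set $y = 1/x$ and use the identity $f_{-n,p}(1/y) = y^n f_{-n,p}(y)$ established in the symmetry proof (with $b = p$), together with the analogous classical fact $(1+1/y)^{-n} = y^n (1+y)^{-n}$. Both of these are exact identities, so reducing mod $p$ and applying the case-(i) congruence $(1+y)^{-n} \equiv f_{-n,p}(y)$ gives $(1+1/y)^{-n} \equiv f_{-n,p}(1/y) \pmod p$ as Laurent series in $y$; extracting the coefficient of $y^{-k} = x^k$ (equivalently, reading off $\binom{-n}{-k}$ versus $\binom{-n}{-k}_p$ via \eqref{eq:ExpansionBinomialNegative} and \eqref{eq:GF-nkb}) finishes the argument. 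Alternatively, and perhaps more cleanly, I could use the symmetry proposition on both sides: $\binom{-n}{-k} = \binom{-n}{-n+k}$ and $\binom{-n}{-k}_p = \binom{-n}{-n+k}_p$, which (when $-n+k \geq 0$) reduces case (ii) to case (i); when $-n+k < 0$ both sides vanish by the symmetry proof and there is nothing to check.

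The main obstacle is essentially bookkeeping rather than depth: one must be careful that "$\equiv \pmod p$" for objects living in $\mathbb{Z}[[x]]$ or $\mathbb{Z}((1/x))$ means coefficient-wise congruence, and that inverting a power series is compatible with this reduction — i.e. that the reduction map $\mathbb{Z}_{(p)}[[x]] \to \mathbb{F}_p[[x]]$ is a ring homomorphism sending units to units, so that $(\overline{g})^{-1} = \overline{g^{-1}}$ for $g$ with unit constant term. Once that point is made explicit, every step is either the quoted elementary congruence, a ring-homomorphism property, or an exact functional identity already proved earlier in the paper, so the proof is short. I would present it as: (1) recall the exact identity and the elementary mod-$p$ congruence; (2) invert mod $p$ to get the power-series congruence, yielding the $k \geq 0$ case; (3) dispatch $k < 0$ via symmetry (or via the $x \mapsto 1/x$ functional equation); (4) note the trivial $n \geq 0, k<0$ case.
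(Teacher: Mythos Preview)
Your proposal is correct and follows the same core idea as the paper: invert the elementary congruence $(1+x)^{n}\equiv\prod_{l}(1+x^{p^{l}})^{n_{l}}\pmod p$ to obtain $f_{-n,p}(x)\equiv(1+x)^{-n}\pmod p$, then read off coefficients. The paper's own argument is in fact just the one-sentence remark preceding the proposition (``the reciprocal of the congruence above\ldots''), so your treatment is considerably more careful---in particular, the paper does not separately address the $k<0$ Laurent case or the point about inversion of units in $\mathbb{F}_p[[x]]$, both of which you handle explicitly.
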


\section{\label{sec:Alternatives}other possible generalizations}

Naturally, we could generalized the $b$-ary binomial coefficients
as, for positive integer $n$
\begin{equation}
\binom{-n}{k}_{b}^{*}:=\prod_{l=0}^{N-1}\binom{-n_{l}}{k_{l}},\label{eq:nkbStar}
\end{equation}
However, this definition is different from (\ref{eq:DEFnkb}). For
instance, (see also Example \ref{exa:Example}), 
\[
\binom{-6}{7}_{4}^{*}=\binom{-1}{1}\binom{-2}{3}=4\ \ \ \text{and}\ \ \ \binom{-6}{-8}_{4}^{*}=\binom{-1}{-2}\binom{-2}{0}=-1.
\]

Meanwhile, we tried to extend (\ref{eq:Sbn}), by expanding both sides
as 
\[
\left(X+Y\right)^{S_{b}(n)}=\prod_{l=0}^{N-1}\left(X+Y\right)^{n_{l}}=\prod_{l=0}^{N-1}\left(\sum_{j_{l}=0}^{n_{l}}\binom{n_{l}}{j_{l}}X^{-n_{l}-j_{l}}Y^{j_{l}}\right),
\]
while in the negative case
\[
\left(X+Y\right)^{-S_{b}(n)}=\prod_{l=0}^{N-1}X^{-n_{l}}\left(1+\frac{Y}{X}\right)^{-n_{l}}=\prod_{l=0}^{N-1}\left(\sum_{j_{l}=0}^{\infty}\binom{-n_{l}}{j_{l}}X^{-n_{l}-j_{l}}Y^{j_{l}}\right).
\]
When $Y$ has power $j_{N-1}+\cdots+j_{0}=S_{b}(k)$, $X$ has the
power $-S_{b}(n)-S_{b}(k)$. The inverse power series leads to similar
results. It seem that we could consider 
\[
\binom{-n}{k}_{b}^{**}:=\left[\frac{Y^{S_{b}(k)}}{X^{S_{b}(n)+S_{b}(k)}}\right]\left(X+Y\right)^{-S_{b}(n)}=\sum_{\genfrac{}{}{0pt}{}{j_{l}\geq0\text{ for }l=0,\ldots,N-1}{j_{N-1}+\cdots+j_{0}=S_{b}(k)}}\left(\prod_{l=0}^{N-1}\binom{-n_{l}}{j_{l}}\right),
\]
and 
\[
\binom{-n}{-k}_{b}^{**}:=\left[\frac{X^{S_{b}(k)}}{X^{S_{b}(n)}Y^{S_{b}(k)}}\right]\left(X+Y\right)^{-S_{b}(n)}=\sum_{\genfrac{}{}{0pt}{}{j_{l}\geq n_{l}\text{ for }l=0,\ldots,N-1}{j_{N-1}+\cdots+j_{0}=S_{b}(k)}}\left(\prod_{l=0}^{N-1}\binom{-n_{l}}{-j_{l}}\right).
\]
Not only do they have different values, e.g., by considering the two
series of $(1+x)^{-3}$, 
\[
\binom{-6}{7}_{4}^{**}=15\ \ \ \text{and}\ \ \ \binom{-6}{-8}_{4}^{**}=0,
\]
but also they do not extend (\ref{eq:Sbn}), since when $S_{b}(k')=S_{b}(k)$,
the definition above implies $\binom{-n}{k}_{b}^{**}=\binom{-n}{k'}_{b}^{**}$.
In the positive case, from the two expansions
\[
\sum_{j=0}^{S_{b}(n)}\binom{S_{b}(n)}{j}X^{j}Y^{S_{b}(n)-j}=(X+Y)^{S_{b}(n)}=\sum_{k=0}^{n}\binom{n}{k}_{b}X^{S_{b}(k)}Y^{S_{b}(n-k)},
\]
we should have 
\[
\binom{S_{b}(n)}{j}=\sum_{\genfrac{}{}{0pt}{}{0\leq k\leq n}{S_{b}(k)=j}}\binom{n}{k}_{b}.
\]
Note that the finite sum on the left-hand side becomes an infinite
series in the negative case, which makes the extension of (\ref{eq:Sbn})
tricky. 

Although the two different extends do not satisfy basic properties
such as symmetry and congruence, they satisfy similar Pascal-like
recurrence, for positive $k$. 
\begin{prop}
\label{prop:PascalStars} For positive integers $n$ and $k$, if
$b\nmid k$ and $b\nmid n$, then
\[
\binom{-n}{k}_{b}^{*}+\binom{-n}{k-1}_{b}^{*}=\binom{-n+1}{k}_{b}^{*}\ \ \ \text{and}\ \ \ \binom{-n}{k}_{b}^{**}+\binom{-n}{k-1}_{b}^{**}=\binom{-n+1}{k}_{b}^{**}.
\]
\end{prop}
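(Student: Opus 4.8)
The plan is to reduce both recurrences to the single-variable Pascal rule
\[
\binom{m}{j}+\binom{m}{j-1}=\binom{m+1}{j},
\]
valid for every $m\in\mathbb{Z}$ and every integer $j\geq1$. This follows at once from $(1+x)^{m+1}=(1+x)(1+x)^{m}$: multiplying the series $\sum_{j\geq0}\binom{m}{j}x^{j}$ by $1+x$ and reading off the coefficient of $x^{j}$ gives $\binom{m}{j}+\binom{m}{j-1}$, which must equal $\binom{m+1}{j}$; for negative $m$ one uses the first expansion in (\ref{eq:ExpansionBinomialNegative}).

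For the $*$-identity I would use that the two divisibility hypotheses forbid any carrying. Since $b\nmid n$ we have $n_{0}\neq0$, so the $b$-ary digits of $-n+1$ agree with those of $-n$ in every position $l\geq1$ and equal $-n_{0}+1$ in position $0$; since $b\nmid k$ we have $k_{0}\neq0$, so the digits of $k-1$ agree with those of $k$ in every position $l\geq1$ and equal $k_{0}-1$ in position $0$. Substituting this into the product (\ref{eq:nkbStar}), the $N-1$ factors indexed by $l\geq1$ are common to $\binom{-n}{k}_{b}^{*}$, $\binom{-n}{k-1}_{b}^{*}$ and $\binom{-n+1}{k}_{b}^{*}$ and can be pulled out, leaving exactly $\binom{-n_{0}}{k_{0}}+\binom{-n_{0}}{k_{0}-1}=\binom{-n_{0}+1}{k_{0}}$, i.e.\ the Pascal rule above with $m=-n_{0}$, $j=k_{0}\geq1$.

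For the $**$-identity I would first record the collapse
\[
\binom{-n}{k}_{b}^{**}=\bigl[y^{S_{b}(k)}\bigr]\prod_{l=0}^{N-1}(1+y)^{-n_{l}}=\bigl[y^{S_{b}(k)}\bigr](1+y)^{-S_{b}(n)}=\binom{-S_{b}(n)}{S_{b}(k)}\qquad(k\geq0),
\]
where the first equality is just the definition of $\binom{-n}{k}_{b}^{**}$ read as a coefficient extraction and the second uses $\sum_{l}n_{l}=S_{b}(n)$ (the padding zero digits contribute nothing). Under the same hypotheses, $b\nmid n$ forces $S_{b}(n-1)=S_{b}(n)-1$ and $b\nmid k$ forces $S_{b}(k-1)=S_{b}(k)-1$, so the claimed identity becomes $\binom{-S_{b}(n)}{S_{b}(k)}+\binom{-S_{b}(n)}{S_{b}(k)-1}=\binom{-S_{b}(n)+1}{S_{b}(k)}$, once more the Pascal rule with $m=-S_{b}(n)$, $j=S_{b}(k)\geq1$.

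The only point demanding care is the digit bookkeeping: one must check that $b\nmid k$ genuinely precludes a borrow in forming $k-1$ (a borrow would force $k_{0}=0$) and that $b\nmid n$ keeps $-n+1$ confined to a single changed digit, and one must compute all three quantities with a common number of digits $N$, padding with zeros — this is harmless because $\binom{0}{0}=1$ and, in the $**$-case, because zero digits add only vanishing terms to the sum. No step is a genuine obstacle; the content of the proposition is simply that, away from multiples of $b$, the starred coefficient behaves like an ordinary negative-entry binomial in the single digit $k_{0}$ and the double-starred one like an ordinary negative-entry binomial in the digit sum $S_{b}(k)$.
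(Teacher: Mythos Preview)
Your proof of the $*$-identity is exactly the paper's: factor out the common $l\geq1$ terms and apply Pascal's rule in the $l=0$ slot.

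For the $**$-identity your route is correct and genuinely shorter than the paper's. You observe the collapse $\binom{-n}{k}_{b}^{**}=\binom{-S_{b}(n)}{S_{b}(k)}$ (this is in fact immediate from the paper's own defining equality $\binom{-n}{k}_{b}^{**}=\bigl[Y^{S_{b}(k)}X^{-S_{b}(n)-S_{b}(k)}\bigr](X+Y)^{-S_{b}(n)}$, though the paper does not exploit it in the proof) and then invoke the single-variable Pascal rule once for $\binom{-S_{b}(n)}{\,\cdot\,}$. The paper instead works with the sum-over-tuples form: it splits $\binom{-n}{k}_{b}^{**}$ according to whether $j_{0}=0$ or $j_{0}>0$, sets up the bijection $(j_{N-1},\ldots,j_{0})\leftrightarrow(j_{N-1},\ldots,j_{0}-1)$ between tuples with $j_{0}>0$ summing to $S_{b}(k)$ and tuples summing to $S_{b}(k-1)=S_{b}(k)-1$, and then applies Pascal's rule term by term inside the sum. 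Your argument buys brevity and makes transparent why the $**$-coefficient depends only on $S_{b}(n)$ and $S_{b}(k)$; the paper's version keeps the combinatorics of the tuple sum visible but is ultimately the same algebra in disguise, since the bijection is exactly what underlies the one-variable Pascal identity once the $N$ factors $(1+y)^{-n_{l}}$ are multiplied together.
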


\begin{proof}
We shall make use of the result \cite[Prop.~4.4]{NegativeBinomial}
that the generalized binomial coefficients also satisfy the Pascal-like
recurrence:
\[
\binom{-n}{k}+\binom{-n}{k-1}=\binom{-n+1}{k}.
\]
Note that $b\nmid k$ and $b\nmid n$ are equivalent to $k_{0}\neq0\neq n_{0}$,
so that 
\[
k-1=(k_{N-1}\cdots k_{1}(k_{0}-1))_{b}\ \ \ \text{and}\ \ \ -n+1=\left((-n_{N-1})\cdots(-n_{1})(-n_{0}+1)\right)_{b}.
\]
(1) Directly, we have 
\begin{align*}
\binom{-n}{k}_{b}^{*}+\binom{-n}{k-1}_{b}^{*} & =\prod_{l=0}^{N-1}\binom{-n_{l}}{k_{l}}+\binom{-n_{0}}{k_{0}-1}\prod_{l=1}^{N-1}\binom{-n_{l}}{k_{l}}\\
 & =\left(\binom{-n_{0}}{k_{0}}+\binom{-n_{0}}{k_{0}-1}\right)\prod_{l=1}^{N-1}\binom{-n_{l}}{k_{l}}\\
 & =\binom{-n_{0}+1}{k_{0}}\prod_{l=1}^{N-1}\binom{-n_{l}}{k_{l}}=\binom{-n+1}{k}_{b}^{*}.
\end{align*}
(2) By definition, 
\[
\binom{-n}{k-1}_{b}^{**}=\sum_{\genfrac{}{}{0pt}{}{j_{l}'\geq0\text{ for }l=0,\ldots,N-1}{j_{N-1}'+\cdots+j_{0}'=S_{b}(k-1)}}\prod_{l=0}^{N-1}\binom{-n_{l}}{j_{l}},
\]
and 
\begin{align*}
\binom{-n}{k}_{b}^{**} & =\sum_{\genfrac{}{}{0pt}{}{j_{l}\geq0\text{ for }l=0,\ldots,N-1}{j_{N-1}+\cdots+j_{0}=S_{b}(k)}}\prod_{l=0}^{N-1}\binom{-n_{l}}{j_{l}}\allowdisplaybreaks\\
 & =\sum_{\genfrac{}{}{0pt}{}{j_{0}>0,j_{l}\geq0\text{ for }l=1,\ldots,N-1}{j_{N-1}+\cdots+j_{0}=S_{b}(k)}}\prod_{l=0}^{N-1}\binom{-n_{l}}{j_{l}}+\sum_{\genfrac{}{}{0pt}{}{j_{0}=0,j_{l}\geq0\text{ for }l=0,\ldots,N-1}{j_{N-1}+\cdots+j_{0}=S_{b}(k)}}\prod_{l=0}^{N-1}\binom{-n_{l}}{j_{l}}.
\end{align*}
Note that $S_{b}(k-1)=S_{b}(k)-1$ and if $j_{0}>0$, there is a one-to-one
correspondence 
\begin{eqnarray*}
\left\{ \left(j_{N-1},\ldots,j_{0}\right):\sum_{l=0}^{N-1}j_{l}=S_{b}(k)\right\}  & \longleftrightarrow & \left\{ \left(j_{N-1}',\ldots,j_{0}'\right):\sum_{l=0}^{N-1}j_{l}'=S_{b}(k-1)\right\} \\
\left(j_{N-1},\ldots,j_{0}\right) & \rightleftharpoons & \left(j_{N-1},\ldots,j_{0}-1\right).
\end{eqnarray*}
Thus, we can rewrite that 
\[
\binom{-n}{k-1}_{b}^{**}=\sum_{\genfrac{}{}{0pt}{}{j_{0}>0,j_{l}\geq0\text{ for }l=1,\ldots,N-1}{j_{N-1}+\cdots+j_{0}=S_{b}(k)}}\binom{-n_{0}}{j_{0}-1}\prod_{l=1}^{N-1}\binom{-n_{l}}{j_{l}}.
\]
In addition, when $j_{0}=0$, 
\[
\binom{-n_{0}}{j_{0}}=1=\binom{-n_{0}+1}{j_{0}}.
\]
Therefore, 
\begin{align*}
\binom{-n}{k}_{b}^{**}+\binom{-n}{k-1}_{b}^{**}= & \sum_{\genfrac{}{}{0pt}{}{j_{0}=0,j_{l}\geq0\text{ for }l=1,\ldots,N-1}{j_{N-1}+\cdots+j_{0}=S_{b}(k)}}\binom{-n_{0}+1}{j_{0}}\prod_{l=0}^{N-1}\binom{-n_{l}}{j_{l}}\\
 & +\sum_{\genfrac{}{}{0pt}{}{j_{0}>0,j_{l}\geq0\text{ for }l=1,\ldots,N-1}{j_{N-1}+\cdots+j_{0}=S_{b}(k)}}\left(\binom{-n_{0}}{j_{0}}+\binom{-n_{0}}{j_{0}-1}\right)\prod_{l=1}^{N-1}\binom{-n_{l}}{j_{l}}\\
= & \sum_{\genfrac{}{}{0pt}{}{j_{l}\geq0\text{ for }l=0,\ldots,N-1}{j_{N-1}+\cdots+j_{0}=S_{b}(k)}}\binom{-n_{0}+1}{j_{0}}\prod_{l=1}^{N-1}\binom{-n_{l}}{j_{l}}=\binom{-n+1}{k}_{b}^{**}.
\end{align*}
\end{proof}
\begin{rem*}
(1) Note that by definition, if $k-1$ has more digits than $n$,
the recurrence for $\binom{-n}{k}_{b}^{*}$ holds vacuously, since
all the three terms vanish. \smallskip{}
\\
(2)  When $k<0$, for ``most'' values of $k$, similar recurrences
hold for $\binom{-n}{k}_{b}^{*}$. It can be observed that if $b\nmid n$
and $k\equiv1$ $\left(\bmod p\right)$, Prop.~\ref{prop:PascalStars}
tend to fail. However, there are not all the exceptions. Please see
the following Table \ref{tab:2}, which lists the result of 
\[
\binom{-n}{-k}_{4}^{*}+\binom{-n}{-k-1}_{4}^{*}-\binom{-n+1}{-k-1}_{4}^{*},
\]
for $1\leq n\leq10$ and $1\leq k\leq20$. Non-zero values locate
where the Pascal-like recurrence fails. 

We shall leave it as part of our future work. \vspace{-10bp}

\begin{table}[H]
\[
\left(\begin{array}{rrrrrrrrrrrrrrrrrrr}
0 & 0 & 1 & 0 & 0 & 0 & 0 & 0 & 0 & 0 & 0 & 0 & 0 & 0 & 0 & 0 & 0 & 0 & 0\\
0 & 0 & -3 & 0 & 0 & 0 & 0 & 0 & 0 & 0 & 0 & 0 & 0 & 0 & 0 & 0 & 0 & 0 & 0\\
0 & 0 & 3 & 0 & 0 & 0 & 0 & 0 & 0 & 0 & 0 & 0 & 0 & 0 & 0 & 0 & 0 & 0 & 0\\
0 & 0 & 0 & 1 & 0 & 0 & -1 & -1 & 0 & 0 & 1 & 1 & 0 & 0 & 0 & 0 & 0 & 0 & 0\\
0 & 0 & 2 & 1 & 0 & 0 & 0 & -1 & 0 & 0 & 0 & 1 & 0 & 0 & 1 & 0 & 0 & 0 & 0\\
0 & 0 & -2 & 0 & 0 & 0 & -4 & 0 & 0 & 0 & 4 & 0 & 0 & 0 & -3 & 0 & 0 & 0 & 0\\
0 & 0 & 4 & 0 & 0 & 0 & 2 & 0 & 0 & 0 & -2 & 0 & 0 & 0 & 3 & 0 & 0 & 0 & 0\\
0 & 0 & -1 & -1 & 0 & 0 & 0 & 2 & 0 & 0 & -1 & -3 & 0 & 0 & -1 & 0 & 0 & 0 & 0\\
0 & 0 & 1 & 0 & 0 & 0 & 1 & 1 & 0 & 0 & -1 & -2 & 0 & 0 & -2 & 0 & 0 & 0 & 0\\
0 & 0 & -3 & 0 & 0 & 0 & 1 & 0 & 0 & 0 & -5 & 0 & 0 & 0 & 6 & 0 & 0 & 0 & 0
\end{array}\right)
\]
\vspace{-5bp}

\caption{\label{tab:2}${\displaystyle \binom{-n}{-k}_{4}^{*}+\binom{-n}{-k-1}_{4}^{*}-\binom{-n+1}{-k-1}_{4}^{*}}$
for $\protect\begin{aligned}1\protect\leq n\protect\leq10\protect\\
1\protect\leq k\protect\leq20
\protect\end{aligned}
$}
\end{table}
\end{rem*}

\section*{Acknowledgment}

The corresponding author is supported by the National Science Foundation
of China (No.~1140149). And the first author is supported by the
Natural Sciences and Engineering Research Council of Canada (No.~145628481).

\end{document}